\documentclass[12pt]{amsart}   
\usepackage{amsmath}
\usepackage{amssymb}   
\usepackage{setspace} 
\usepackage{graphicx}	        

\onehalfspacing

\DeclareMathOperator{\Tr}{Tr}

\def\Dic{\text{Dic}} 
\def\tv{\text{TV}}


\newtheorem{theorem}{Theorem}[section]
\newtheorem{proposition}[theorem]{Proposition}
\newtheorem{lemma}[theorem]{Lemma}

\newtheorem*{theorem*}{Theorem}

\pagestyle{myheadings}

\title{Random Walks on Dicyclic Group}
\author{Songzi Du \\ Stanford GSB}
\thanks{Contact: songzidu@stanford.edu}
\date{June 18, 2008}

\begin{document}


\maketitle
\markboth{\sc Songzi Du}{\sc Random Walks on Dicyclic Group}


\section{Introduction}
In this paper I work out the rate of convergence of a non-symmetric random walk on the dicyclic group ($\Dic_n$)
and that of its symmetric analogue on the same group.  The analysis is via group representation techniques from Diaconis (1988) and 
closely follows the analysis of random walk on cyclic group in Chapter 3C of Diaconis.  I find that while the mixing times (the time
for the random walk to get ``random'')
for the non-symmetric and the symmetric random walks on $\Dic_n$ are both on the order of $n^2$, 
the symmetric random walk will take approximately twice as long to get ``random'' as the non-symmetric walk.

\section{$\Dic_n$ and its Irreducible Representations}

For integer $n \geq 1$ the dicyclic group (of order $4n$), $\Dic_n$, has the presentation
$$<a, x \mid a^{2n} = 1, x^2 = a^n, x a x^{-1}= a^{-1}>.$$

More concretely, $\Dic_n$ is composed of $4n$ elements: $1, a, a^2, \ldots, a^{2n-1}$, $x, a x, a^2 x, \ldots, a^{2n-1}x$.
The multiplications are as follows (all additions in exponents are modulo $2n$):
\begin{itemize}
\item $(a^k) (a^m) = a^{k+m}$
\item $(a^k) (a^m x) = a^{k+m} x$
\item $(a^k x) (a^m) = a^{k-m} x$
\item $(a^k x) (a^m x) = a^{k-m+n}$
\end{itemize}

$\Dic_2$ is the celebrity among all $\Dic_n$'s and is known as the quaternion group.

It's clear that $\{1, a, a^2, \ldots, a^{2n-1}\}$ is an Abelian subgroup of $\Dic_n$ of index 2.  Therefore,
each irreducible representation of $\Dic_n$ has dimension less than or equal to $2$ (Corollary to Theorem 9 in Serre (1977)).

When $n$ is odd, the 1-D irreducible representations of $\Dic_n$ are:
\begin{itemize}
\item $\psi_0(a)=1, \psi_0(x)=1$
\item $\psi_1(a)=1, \psi_1(x)=-1$
\item $\psi_2(a)=-1, \psi_2(x)=i$
\item $\psi_3(a)=-1, \psi_3(x)=-i$
\end{itemize}

When $n$ is even, the 1-D irreducible representations of $\Dic_n$ become:
\begin{itemize}
\item $\psi_0(a)=1, \psi_0(x)=1$
\item $\psi_1(a)=1, \psi_1(x)=-1$
\item $\psi_2(a)=-1, \psi_2(x)=1$
\item $\psi_3(a)=-1, \psi_3(x)=-1$
\end{itemize}

One can work out 1-D representation $\psi$ using the identity $\psi(a x a)=\psi(x)$, 
and thus $\psi(a)=\pm 1$.  If $n$ is even, this means $\psi(x^2) = \psi(a^n) = 1$, thus
$\psi(x) = \pm 1$ as well.  When $n$ is odd, we can have $\psi(a^n) = -1$, so $\psi(x)$ can be
$\pm i$.  This explains the difference in 1-D representations when $n$ is even or odd.

The 2-D irreducible representations of $\Dic_n$ are:

\begin{equation}
\label{2drep}
\rho_r(a)=\begin{pmatrix} \omega^r & 0  \\ 0 & \omega^{-r} \end{pmatrix}, 
\rho_r(x)=\begin{pmatrix} 0 & (-1)^r  \\ 1 & 0 \end{pmatrix},
\end{equation}

where $1 \leq r \leq n-1$ and $\omega = e^{\pi i/n}$.  One can easily check
that these representations are all distinct and irreducible (by checking their characters), and
that indeed we have $\rho_r(x a x^{-1}) = \rho_r(a^{-1})$, $\rho_r(x^2)=\rho_r(a^n)$, and
$\rho_r(a^{2n})=\rho_r(1)$.

Clearly, all $\psi_0, \ldots, \psi_3, \rho_1, \ldots, \rho_{n-1}$ are unitary representations
(conjugate transposes being their inverses).  They are all of the possible irreducible representations
of $\Dic_n$, because $4 + 4(n-1) = 4n$, which is the order of $\Dic_n$.

\section{An Asymmetric Random Walk on $\Dic_n$}
In this and next section, we will assume that $n$ is odd.  

Let $Q(a) = Q(x) = 1/2$ ($Q(s) = 0$ for other $s \in \Dic_n$), and 
consider the non-symmetric random walk generated on $\Dic_n$ by $Q$: the probability of going from
$s$ to $t$ is $Q(ts^{-1})$, $s, t \in \Dic_n$.  Clearly, the uniform distribution on $\Dic_n$, $U(s)=1/4n$, $s \in \Dic_n$, is
a stationary distribution of random walk $Q$.  In fact, it is the only one, as $Q$ is irreducible
and aperiodic: $Q$ being irreducible is clear; aperiodicity of $Q$ follows from identities $a^{2n}=1$ and
$a^{n+1} x a x = 1$, and the following lemma:

\begin{lemma}
If $n$ is odd, the greatest common divisor of $2n$ and $n+4$ is 1.
\end{lemma}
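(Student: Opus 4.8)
The plan is to compute $\gcd(2n, n+4)$ directly using the Euclidean algorithm, keeping track of how the parity of $n$ enters. The key observation is that $2(n+4) - 2n = 8$, so any common divisor of $2n$ and $n+4$ must divide $8$, hence lies in $\{1, 2, 4, 8\}$. Thus it suffices to rule out $2$ (which immediately rules out $4$ and $8$ as well) as a common divisor.

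**Next I would** use the hypothesis that $n$ is odd. If $n$ is odd, then $n+4$ is odd (odd plus even is odd), so $2$ does not divide $n+4$. Therefore no even number divides $n+4$, and in particular the common divisor of $2n$ and $n+4$, which we showed divides $8$, cannot be $2$, $4$, or $8$. Hence it equals $1$, i.e. $\gcd(2n, n+4) = 1$.

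**The main obstacle**, such as it is, is merely bookkeeping: being careful that the linear combination $2(n+4) - 2n = 8$ is set up correctly so that a common divisor of the two original numbers is forced to divide $8$, and then correctly invoking the parity of $n$. There is no real difficulty here — the statement is essentially a one-line number-theoretic fact — so I would present it concisely: first the divisibility-of-$8$ reduction, then the parity argument to eliminate all even divisors. An alternative phrasing would write $d = \gcd(2n, n+4)$ and note $d \mid 2n$ and $d \mid 2(n+4)$, so $d \mid 2(n+4) - 2n = 8$; combined with $d \mid n+4$ and $n+4$ odd, we get $d \mid \gcd(8, n+4) = 1$. Either route is a few lines at most.
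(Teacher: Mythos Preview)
Your proof is correct. Both your argument and the paper's reduce to showing that any common divisor is a small power of $2$ and then invoke the oddness of $n$, but the mechanics differ. The paper splits into cases according to whether $k \mid n$: if so, then $k \mid (n+4)-n = 4$; if not, then $k \mid 2n$ forces $2 \mid k$, contradicting $k \mid n+4$ with $n$ odd. You instead take the single linear combination $2(n+4) - 2n = 8$ to get $d \mid 8$ in one stroke, then use that $n+4$ is odd. Your route is a bit cleaner since it avoids the case split entirely; the paper's version has the minor advantage of landing on $4$ rather than $8$, but that costs nothing here. Either way the argument is a couple of lines.
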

\begin{proof}
Suppose integer $k \geq 1$ divides both $2n$ and $n+4$.  

If $k$ divides $n$,
then this together with $k$ dividing $n+4$ implies that $k$ divides $4$ as well, i.e.\ 
$k = 1, 2,$ or $4$; since $n$ is odd, this means that $k=1$.

If $k$ does not divide $n$, then $k$ dividing $2n$ implies that 2 divides $k$; but 
this together with $k$ dividing $n+4$ implies that 2 divides $n$; thus we can't have
$k$ not dividing $n$.
\end{proof}

We are interested in how ``well mixed'' (or ``random'') the random walk $Q$ is after $k$ steps, i.e.\ the total variation distance to the uniform distribution $U(s) = 1/4n$, $s \in \Dic_n$:
\begin{equation}
\label{tv}
\| Q^{*k} - U \|_\tv = \max_{A \subseteq \Dic_n} | Q^{*k}(A) - U(A) |
\end{equation}
where $Q^{*k}$ is the $k$-th convolution of $Q$ with itself: $Q^{*1}=Q$, and 
$Q^{*k}(s)= \sum_{t \in \Dic_n} Q(s t^{-1}) Q^{*k-1}(t)$ for $s \in \Dic_n$ and $k \geq 2$.

\subsection{Upper Bound}
Our upper bound on the distance to stationarity (Equation \ref{tv}) comes from Lemma 1
of Diaconis (1988), Chapter 3B:
\begin{lemma}
For any probability measure $P$ on a finite group $G$, 
$$\| P - U \|_{\tv}^2 \leq \frac{1}{4} \sum_{\rho \neq 1} d_\rho \Tr(\hat{P}(\rho) \hat{P}(\rho)^{*}),$$
where $U(s) = 1/|G|$, the summation is over all non-trivial unitary (* here refers to conjugate transpose)
irreducible representations of $G$, $d_\rho$ is the degree of the representation $\rho$, 
and $\hat{P}(\rho) = \sum_{s \in G} P(s) \rho(s)$
is the Fourier transform of $P$ at the representation $\rho$.
\end{lemma}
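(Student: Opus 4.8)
The plan is to pass from the total-variation distance to an $L^2$ quantity and then apply the Plancherel formula for finite groups. First I would use the standard identity $\| P - U \|_{\tv} = \max_{A\subseteq G}|P(A)-U(A)| = \tfrac12 \sum_{s\in G}|P(s)-U(s)|$, and then bound the $\ell^1$ sum by an $\ell^2$ sum via Cauchy--Schwarz: $\sum_{s\in G} 1\cdot|P(s)-U(s)| \le |G|^{1/2}\bigl(\sum_{s\in G}|P(s)-U(s)|^2\bigr)^{1/2}$. Squaring, this gives $4\|P-U\|_{\tv}^2 \le |G|\sum_{s\in G}|P(s)-U(s)|^2$, reducing the problem to computing an $L^2$ norm.

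Next I would rewrite the right-hand side on the Fourier side. The substantive (though classical) input here is the Plancherel/Parseval identity: for any $f\colon G\to\mathbb{C}$, $\sum_{s\in G}|f(s)|^2 = \frac{1}{|G|}\sum_{\rho} d_\rho\,\Tr\bigl(\hat f(\rho)\hat f(\rho)^{*}\bigr)$, the sum running over a complete set of inequivalent irreducible unitary representations; this is the step I expect to carry the weight, and it follows from the Schur orthogonality relations for matrix coefficients (equivalently, from Fourier inversion on $G$). Applying it to $f = P-U$ and using $\widehat{(P-U)}(\rho) = \hat P(\rho) - \hat U(\rho)$ gives $|G|\sum_{s\in G}|P(s)-U(s)|^2 = \sum_{\rho} d_\rho\,\Tr\bigl(\widehat{(P-U)}(\rho)\,\widehat{(P-U)}(\rho)^{*}\bigr)$.

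Finally I would evaluate the Fourier transform of $P-U$ term by term. For the trivial representation $\rho = 1$ we have $\hat P(1) = \sum_{s\in G}P(s) = 1 = \hat U(1)$, so that term vanishes. For every non-trivial irreducible $\rho$, $\hat U(\rho) = \frac{1}{|G|}\sum_{s\in G}\rho(s) = 0$, again by Schur orthogonality (the average of a non-trivial irreducible representation over the group is the zero matrix), so $\widehat{(P-U)}(\rho) = \hat P(\rho)$. Substituting back, $4\|P-U\|_{\tv}^2 \le \sum_{\rho\neq 1} d_\rho\,\Tr\bigl(\hat P(\rho)\hat P(\rho)^{*}\bigr)$, and dividing by $4$ yields the stated bound. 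There is no genuine obstacle beyond correctly invoking Plancherel and the vanishing of $\hat U$ on non-trivial representations; the rest is bookkeeping.
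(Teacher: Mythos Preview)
Your argument is correct and is exactly the standard proof: Cauchy--Schwarz to pass from $\ell^1$ to $\ell^2$, then Plancherel, then the observation that $\hat U$ vanishes on non-trivial irreducibles while $\hat P(1)=\hat U(1)=1$. Note that the paper does not actually prove this lemma---it is quoted verbatim as Lemma~1 of Chapter~3B in Diaconis (1988)---so there is nothing to compare against beyond saying your proof is the one Diaconis gives.
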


Specializing to $G = \Dic_n$ and $P=Q^{*k}$, we get that
\begin{equation}
\label{upperbound}
\| Q^{*k} - U \|_{\tv}^2 \leq \frac{1}{4} \left( \sum_{i=1}^{3} \hat{Q}(\psi_i)^k (\hat{Q}(\psi_i)^k)^* +  
\sum_{r=1}^{n-1}2 \Tr \left( \hat{Q}(\rho_r)^k (\hat{Q}(\rho_r)^k)^* \right) \right),
\end{equation}
where $\psi_i$ and $\rho_r$ are listed in the previous section.

For $\psi_i$, we find that $\hat{Q}(\psi_1)^k (\hat{Q}(\psi_1)^k)^* = 0$ and 
$\hat{Q}(\psi_2)^k (\hat{Q}(\psi_2)^k)^* = \hat{Q}(\psi_3)^k (\hat{Q}(\psi_3)^k)^* = 2^{-k}$.

For odd $r$ such that $1 \leq r \leq n-1$, we have
$$\hat{Q}(\rho_r) \hat{Q}(\rho_r)^* = \frac{1}{4} 
\begin{pmatrix} 2 & 0  \\ 0 & 2   \end{pmatrix}. $$ 

As a result,
$$\Tr \left( \hat{Q}(\rho_r)^k (\hat{Q}(\rho_r)^k)^{*} \right) = 2^{-k+1},$$
holds for $r$ odd, $1 \leq r \leq n-1$.

For even $r$, we use the diagonalization (recall that $\omega = e^{\pi i /n}$)
\begin{equation}
\label{diag1}
2 \hat{Q}(\rho_r) = \begin{pmatrix}\omega^r & 1 \\ 1 & \omega^{-r} \end{pmatrix} = 
\begin{pmatrix} \omega^{-r} & \omega^r \\ -1 & 1 \end{pmatrix} 
\begin{pmatrix} 0 & 0 \\ 0 & \omega^r+\omega^{-r} \end{pmatrix}
\left(\frac{1}{\omega^r+\omega^{-r}} \begin{pmatrix} 1 & -\omega^r \\ 1 & \omega^{-r} \end{pmatrix} \right),
\end{equation}
so that
$$ \Tr\left( 2^k \hat{Q}(\rho_r)^k (2^k\hat{Q}(\rho_r)^k)^* \right) = 4 (\omega^r+\omega^r)^{2k-2},$$
and therefore, for $r$ even, $1 \leq r \leq n-1$, 
$$\Tr\left( \hat{Q}(\rho_r)^k (\hat{Q}(\rho_r)^k)^{*} \right) = \left( \cos \frac{r \pi}{n} \right)^{2k-2}.$$

Plug these into (\ref{upperbound}), we have:
$$
\| Q^{*k} - U \|_{\tv}^2 \leq n 2^{-k-1} + 
\frac{1}{2} \sum_{\substack{r=2 \\r \text{ even}}}^{n-1} \left( \cos \frac{r \pi}{n} \right)^{2k-2}.
$$

We need to bound the second term on the right hand side.  First notice that
$$ \sum_{\substack{r=2 \\r \text{ even}}}^{n-1} \left( \cos \frac{r \pi}{n} \right)^{2k-2}
= \sum_{r=1}^{(n-1)/2} \left( \cos \frac{r \pi}{n} \right)^{2k-2}.
$$

We then can use $\cos(x) \leq e^{-x^2/2}$, for $x \in [0, \pi/2]$, (see Appendix for derivations of this and
other cosine inequalities used in the paper) to get, for $k \geq 2$:
$$ \sum_{r=1}^{(n-1)/2} \cos \left( \frac{r \pi}{n} \right)^{2k-2} \leq 
\sum_{r=1}^{(n-1)/2} \exp\left( - \frac{r^2 \pi^2}{n^2} (k-1) \right)
\leq \frac{\exp(-\pi^2(k-1)/n^2)}{1-\exp(-3 \pi^2(k-1)/n^2)}.
$$

Therefore, our upper bound is, for odd $n$ and $k \geq 2$,
\begin{equation}
\| Q^{*k} - U \|_{\tv}^2 \leq n 2^{-k-1} + \frac{\exp(-\pi^2(k-1)/n^2)}{2(1-\exp(-3 \pi^2(k-1)/n^2))}.
\end{equation}

\subsection{Lower Bound}

One can easily show that for any two probability measures $P_1$ and $P_2$ on a finite set $X$,
$$ \| P_1 - P_2 \|_\tv = \frac{1}{2} \max_{\substack{f: X \rightarrow \mathbb{R} \\ \|f\|\leq 1}} | P_1(f) - P_2(f) |,
$$
where $P_1(f) = \sum_{x \in X} f(x) P_1(x)$, and likewise for $P_2(f)$.

For any finite group $G$, the uniform distribution on $G$, $U(s) = 1/|G|$, enjoys the property
that $\hat{U}(\rho) = 0$ for any non-trivial irreducible representation $\rho$ of $G$ (Exercise 3 of Chapter 2B
in Diaconis (1988)).

Let $f(s) = \frac{1}{2} \Tr\rho_r(s)$ for $s \in \Dic_n$, where $1 \leq r \leq n-1$ and
$\rho_r$ is as in Equation~\ref{2drep}, we check that $f(s)$ is a real number and that $|f(s)| \leq 1$, for all $s \in \Dic_n$;
and $U(f) = \Tr \hat{U}(\rho_r)/2 = 0$.  Therefore, we have
\begin{equation}
\label{lowerbound}
\| Q^{*k} - U \|_\tv \geq \frac{1}{2} \left| \sum_{s \in \Dic_n}Q^{*k}(s) f(s)\right|
= \frac{1}{4} \left| \Tr \left( \hat{Q}(\rho_r)^k \right) \right|.
\end{equation}

Using the diagolization in Equation~\ref{diag1}, we have for even $r$ such that
$1 \leq r \leq n-1$,
\begin{equation*}
\| Q^{*k} - U \|_\tv \geq \frac{1}{4} \left| \cos \left(\frac{r\pi}{n}\right)^k \right|.
\end{equation*}
We can let $r=n-1$, and get for $n \geq 7$,
\begin{equation*}
\| Q^{*k} - U \|_\tv \geq \frac{1}{4} \cos \left( \frac{\pi}{n} \right)^k \geq 
\frac{1}{4}\exp \left( -\frac{\pi^2 k}{2n^2} - \frac{\pi^4 k}{12 n^4} - \frac{17 \pi^5 k}{120 n^5} \right),
\end{equation*}
by the inequality $e^{-x^2/2-x^4/12-17x^5/120} \leq \cos(x)$ for $0\leq x \leq 1/2$.

We summarize the results of this section in the following theorem:

\begin{theorem}
\label{thm:nonsym}
Fix the probability measure $Q$ on $\Dic_n$ such that $Q(a)=Q(x)=1/2$.

For any odd $n \geq 1$ and any $k\geq 2$, we have
$$\| Q^{*k} - U \|_{\tv}^2 \leq n 2^{-k-1} + \frac{\exp(-\pi^2(k-1)/n^2)}{2(1-\exp(-3 \pi^2(k-1)/n^2))}.$$

For any odd $n \geq 7$ and any $k\geq 1$, we have
$$\frac{1}{4}\exp \left( -\frac{\pi^2 k}{2n^2} - \frac{\pi^4 k}{12 n^4} - \frac{17 \pi^5 k}{120 n^5} \right) \leq \| Q^{*k} - U \|_{\tv}.$$
\end{theorem}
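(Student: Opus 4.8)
Both inequalities are assembled from the Fourier estimates already carried out in this section, the common thread being that $\widehat{Q^{*k}}(\rho)=\hat{Q}(\rho)^k$ for every representation $\rho$. For the upper bound the plan is to invoke the Diaconis upper bound lemma quoted above with $G=\Dic_n$ and $P=Q^{*k}$, i.e.\ to work from estimate~(\ref{upperbound}); for the lower bound the plan is to use the duality $\|P_1-P_2\|_\tv=\tfrac12\max_{\|f\|_\infty\le1}|P_1(f)-P_2(f)|$ with a test function built from one well-chosen $\rho_r$.

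For the upper bound the only inputs are the Fourier transforms of $Q$. For the $1$-dimensional representations one checks $\hat{Q}(\psi_1)=0$ and $|\hat{Q}(\psi_2)|^2=|\hat{Q}(\psi_3)|^2=\tfrac12$, contributing $0,2^{-k},2^{-k}$. For odd $r$ one computes $\hat{Q}(\rho_r)\hat{Q}(\rho_r)^*=\tfrac12 I$, so $\hat{Q}(\rho_r)$ is $2^{-1/2}$ times a unitary and $\Tr(\hat{Q}(\rho_r)^k(\hat{Q}(\rho_r)^k)^*)=2^{-k+1}$. For even $r$ the matrix $2\hat{Q}(\rho_r)$ has determinant $0$ and trace $2\cos(r\pi/n)$, hence is rank one with eigenvalues $0$ and $2\cos(r\pi/n)$; therefore $(2\hat{Q}(\rho_r))^k=(2\cos(r\pi/n))^{k-1}(2\hat{Q}(\rho_r))$, and since the entries of $2\hat{Q}(\rho_r)$ have squared moduli summing to $4$ this gives $\Tr(\hat{Q}(\rho_r)^k(\hat{Q}(\rho_r)^k)^*)=(\cos(r\pi/n))^{2k-2}$, which is exactly what the diagonalization~(\ref{diag1}) records. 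Collecting terms, the $1$-dimensional and odd-$r$ contributions amount, after the factor $\tfrac14$, to exactly $n2^{-k-1}$, leaving $\tfrac12\sum_{r\text{ even}}(\cos(r\pi/n))^{2k-2}$. I would then observe that $r\mapsto n-r$ is a fixed-point-free involution of $\{1,\dots,n-1\}$ pairing each even $r$ with an odd $n-r$ while preserving $|\cos(r\pi/n)|$, so that this sum equals $\sum_{m=1}^{(n-1)/2}(\cos(m\pi/n))^{2k-2}$; finally $\cos x\le e^{-x^2/2}$ on $[0,\pi/2]$ together with $m^2\ge 3m-2$ dominates it by the geometric series $\sum_{m\ge1}e^{2c-3mc}$ with $c=\pi^2(k-1)/n^2$, which sums to $e^{-c}/(1-e^{-3c})$ and yields the claimed bound.

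For the lower bound the plan is to take $f(s)=\tfrac12\Tr\rho_r(s)$: it is real-valued (equal to $\cos(rj\pi/n)$ on $a^j$ and to $0$ on $a^j x$), satisfies $|f(s)|\le1$ for all $s$, and has $U(f)=\tfrac12\Tr\hat{U}(\rho_r)=0$ since $\hat U$ vanishes at nontrivial irreducibles. Hence $\|Q^{*k}-U\|_\tv\ge\tfrac12|Q^{*k}(f)|=\tfrac14|\Tr\hat{Q}(\rho_r)^k|$. For even $r$ the rank-one structure above gives $\Tr\hat{Q}(\rho_r)^k=(\cos(r\pi/n))^k$, and taking $r=n-1$ (even, as $n$ is odd), with $|\cos((n-1)\pi/n)|=\cos(\pi/n)$, produces $\|Q^{*k}-U\|_\tv\ge\tfrac14(\cos(\pi/n))^k$. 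Requiring $n\ge7$ so that $\pi/n\le\tfrac12$, the inequality $e^{-x^2/2-x^4/12-17x^5/120}\le\cos x$ on $[0,\tfrac12]$ with $x=\pi/n$ completes the argument.

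None of this is deep. The only steps that require genuine care are the rank-one bookkeeping that produces the exponent $2k-2$ (rather than $2k$) for even $r$, the reindexing of the even-$r$ cosine sum as a sum over $m=1,\dots,(n-1)/2$, and marshalling the elementary cosine inequalities, whose proofs are deferred to the appendix.
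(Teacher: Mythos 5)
Your proposal is correct and follows essentially the same route as the paper: the Diaconis upper bound lemma with the same Fourier computations (your rank-one/Cayley--Hamilton shortcut for even $r$ is just a repackaging of the paper's diagonalization~(\ref{diag1})), the same reindexing and geometric-series bound for the cosine sum, and the same test function $f=\tfrac12\Tr\rho_{n-1}$ combined with the appendix cosine inequalities for the lower bound. No gaps; your explicit $r\mapsto n-r$ justification of the even-$r$ reindexing merely fills in a step the paper asserts without proof.
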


\section{A Symmetric Random Walk on $\Dic_n$}
We now consider the symmetrization of the previous random walk:
let $Q$ be such that $Q(a)=Q(a^{2n-1})=Q(x)=Q(a^nx)=1/4$ ($Q(s) = 0$ for other $s \in \Dic_n$);
and $Q(ts^{-1})$ is still the probability of going from $s$ to $t$, $s, t \in \Dic_n$.  
As before, we assume that $n$ is odd. Clearly, the uniform distribution 
$U(s)=1/4n$, $s\in\Dic_n$, is still the unique stationary distribution of this new random walk $Q$.
And as before, we are interested in bounding Equation~\ref{tv}.

\subsection{Upper Bound}
We first note that Inequality~\ref{upperbound} still holds for our new $Q$.

Now, we have $\hat{Q}(\psi_1)^k (\hat{Q}(\psi_1)^k)^*=0$, and 
$\hat{Q}(\psi_2)^k (\hat{Q}(\psi_2)^k)^* = \hat{Q}(\psi_3)^k (\hat{Q}(\psi_3)^k)^* = 1/4^k$.

For $1 \leq r \leq n-1$, 
\begin{equation*}
\hat{Q}(\rho_r) = \frac{1}{2} \begin{pmatrix} \cos \frac{r\pi}{n} & \frac{1}{2}(-1)^r + \frac{1}{2}  
\\ \frac{1}{2}(-1)^r + \frac{1}{2}  &   \cos \frac{r\pi}{n}  \end{pmatrix}.
\end{equation*}

Thus, for odd $r$, $1 \leq r \leq n-1$, 
\begin{equation*}
\Tr \left( \hat{Q}(\rho_r)^k (\hat{Q}(\rho_r)^k)^* \right )
= \frac{2}{4^k} \left( \cos \frac{r \pi}{n} \right)^{2k}.
\end{equation*}

For even $r$, $1 \leq r \leq n-1$, we have the diagonalization
\begin{equation}
\label{diag2}
2 \hat{Q}(\rho_r) = 
\begin{pmatrix} \frac{1}{\sqrt{2}} & \frac{1}{\sqrt{2}} \\ \frac{1}{\sqrt{2}} & -\frac{1}{\sqrt{2}} \end{pmatrix}
\begin{pmatrix} \cos \frac{r\pi}{n} + 1 & 0 \\ 0 & \cos \frac{r\pi}{n} - 1 \end{pmatrix}
\begin{pmatrix} \frac{1}{\sqrt{2}} & \frac{1}{\sqrt{2}} \\ \frac{1}{\sqrt{2}} & -\frac{1}{\sqrt{2}} \end{pmatrix}.
\end{equation}

Thus, for even $r$, $1 \leq r \leq n-1$,
\begin{equation*}
\Tr \left( \hat{Q}(\rho_r)^k (\hat{Q}(\rho_r)^k)^* \right )
= \frac{1}{4^k} \left( \left( \cos \frac{r \pi}{n} + 1 \right)^{2k} + \left( \cos \frac{r \pi}{n} - 1 \right)^{2k} \right).
\end{equation*}

Inequality~\ref{upperbound} therefore translates to
\begin{align*}
\| Q^{*k} - U \|_{\tv}^2 & \leq \frac{1}{2} \left( \frac{1}{4^k} \right) + 
\sum_{\substack{r=1\\r \text{ odd}}}^{n-1} \left( \frac{1}{2} \cos\frac{r \pi}{n} \right)^{2k} \\
& + \frac{1}{2} \sum_{\substack{r=2\\r \text{ even}}}^{n-1} \frac{1}{4^k} \left( \left(\cos \frac{r \pi}{n} + 1\right)^{2k}
+ \left( \cos \frac{r \pi}{n} -1 \right)^{2k} \right).
\end{align*}

The second term in the right hand side is easily bounded:
$$\sum_{\substack{r=1\\r \text{ odd}}}^{n-1} \left( \frac{1}{2} \cos\frac{r \pi}{n} \right)^{2k} \leq \frac{n-1}{2} \frac{1}{4^{k}}.$$

And for the third term,
\begin{align*}
& \sum_{\substack{r=2\\r \text{ even}}}^{n-1} \frac{1}{4^k} \left( \left(\cos \frac{r \pi}{n} + 1\right)^{2k}
+ \left( \cos \frac{r \pi}{n} -1 \right)^{2k} \right) \\
& \leq \sum_{r=1}^{(n-1)/2} \frac{1}{4^k} \left( 1+ \cos\frac{r \pi}{n} \right)^{2k} + \frac{n-1}{2} \frac{1}{4^k} \\
& \leq \sum_{r=1}^{(n-1)/2} \exp(-r^2 \pi^2 k / 2 n^2) + \frac{n-1}{2} \frac{1}{4^k} \\
& \leq \frac{\exp(-\pi^2 k / 2n^2)}{1-\exp(-3\pi^2 k / 2n^2)} + \frac{n-1}{2} \frac{1}{4^k},
\end{align*}
where we used the inequality $(1+\cos x)/2 \leq e^{-x^2/4}$ for all $x\in [0, \pi]$.

Collecting the terms together, we have, 
\begin{equation*}
\| Q^{*k} - U \|_\tv^2 \leq \frac{3n-1}{4} \frac{1}{4^k} + \frac{\exp(-\pi^2 k / 2n^2)}{2 (1-\exp(-3\pi^2 k / 2n^2))}.
\end{equation*}
\subsection{Lower Bound}
Inequality~\ref{lowerbound} is valid for our new $Q$ as well.  Using the diagonalization in (\ref{diag2}) for $\rho_{n-1}$ 
(assuming that $n>1$), Inequality~\ref{lowerbound} implies that
\begin{align*}
\| Q^{*k} - U \|_\tv & \geq \frac{1}{4} \left( \frac{1}{2^k} \right)
\left| \left( -\cos \frac{\pi}{n}+1 \right)^k + \left( -\cos \frac{\pi}{n}-1 \right)^k \right| \\
& \geq \frac{1}{4} \left( \left( \frac{1+\cos(\pi/n)}{2} \right)^k - \frac{1}{2^k} \right) \\
& \geq \frac{1}{4} \left( \exp \left( -\frac{\pi^2 k}{4n^2} - \frac{\pi^4 k} {96 n^4} - \frac{\pi^5 k}{400n^5} \right) - \frac{1}{2^k} \right)
\end{align*}
for $n \geq 7$; in the last time we used the inequality $e^{-x^2/4-x^4/96-x^5/400} \leq (\cos x+1)/2$ for $x\in[0, 1/2]$.

Therefore, we arrive at
\begin{theorem}
\label{thm:sym}
Fix the probability measure $Q$ on $\Dic_n$ such that $Q(a)=Q(a^{2n-1})=Q(x)=Q(a^n x)=1/4$.

For any odd $n \geq 1$ and any $k\geq 1$, we have
$$\| Q^{*k} - U \|_\tv^2 \leq \frac{3n-1}{4} \frac{1}{4^k} + \frac{\exp(-\pi^2 k/2n^2)}{2 (1-\exp(-3\pi^2 k / 2n^2))}$$.

For any odd $n \geq 7$ and any $k\geq 1$, we have
$$  \frac{1}{4} \exp \left( -\frac{\pi^2 k}{4n^2} - \frac{\pi^4 k} {96 n^4} - \frac{\pi^5 k}{400n^5} \right) - \frac{1}{2^{k+2}}
\leq \| Q^{*k} - U \|_{\tv}.$$
\end{theorem}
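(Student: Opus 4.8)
The plan is to read off both inequalities from the Fourier-transform computations already made in this section, in exactly the way the asymmetric case is handled in Theorem~\ref{thm:nonsym}; note that the symmetrized $Q$ satisfies $Q(s)=Q(s^{-1})$, so every $\hat{Q}(\rho)$ is Hermitian and the relevant traces are controlled by real eigenvalues, which is why the diagonalizations in the section have real diagonal entries. For the \emph{upper bound} I would start from the Upper Bound Lemma in the form of Inequality~\ref{upperbound}, valid verbatim for the new $Q$ because $\Dic_n$ and its irreducible representations are unchanged. The inputs are the Fourier data recorded above: $\hat{Q}(\psi_1)=0$ and $\hat{Q}(\psi_2)=\hat{Q}(\psi_3)=-1/2$ (using that $n$ is odd, so $\psi_2(a^n x)=(-1)^n i=-i$), and the stated matrix $\hat{Q}(\rho_r)$, which for odd $r$ equals $\tfrac12\cos\tfrac{r\pi}{n}$ times the identity and for even $r$ has the diagonalization~\ref{diag2} with real eigenvalues $\tfrac12(\cos\tfrac{r\pi}{n}+1)$ and $\tfrac12(\cos\tfrac{r\pi}{n}-1)$. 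Substituting the corresponding traces $2\cdot4^{-k}\cos^{2k}\tfrac{r\pi}{n}$ (odd $r$) and $4^{-k}\big((\cos\tfrac{r\pi}{n}+1)^{2k}+(\cos\tfrac{r\pi}{n}-1)^{2k}\big)$ (even $r$) into Inequality~\ref{upperbound} yields three pieces: the $\psi_i$-piece contributes $\tfrac12\cdot4^{-k}$, and the odd-$r$ piece has $(n-1)/2$ terms each at most $4^{-k}$, contributing at most $\tfrac{n-1}{2}4^{-k}$.

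The even-$r$ piece is the only one requiring thought. Here I would use the identity $(\cos\theta+1)^{2k}+(\cos\theta-1)^{2k}=(1+|\cos\theta|)^{2k}+(1-|\cos\theta|)^{2k}$: the $(1-|\cos\theta|)^{2k}$ half is at most $1$, contributing a further $\tfrac{n-1}{2}4^{-k}$ before the global factor $\tfrac12$ in Inequality~\ref{upperbound}; for the $(1+|\cos\theta|)^{2k}$ half I would use that $r\mapsto n-r$ bijects the even indices in $[2,n-1]$ onto $\{1,\dots,(n-1)/2\}$ (with no fixed point, since $n$ is odd) and satisfies $\cos\tfrac{(n-r)\pi}{n}=-\cos\tfrac{r\pi}{n}$, so this half equals $\sum_{j=1}^{(n-1)/2}\big(\tfrac12(1+\cos\tfrac{j\pi}{n})\big)^{2k}$. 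Applying the Appendix inequality $(1+\cos x)/2\le e^{-x^2/4}$ and then $\sum_{j\ge1}e^{-j^2 c}\le e^{-c}/(1-e^{-3c})$ (valid since $(j+1)^2-j^2\ge3$) with $c=\pi^2k/(2n^2)$ bounds it by $e^{-\pi^2k/2n^2}/(1-e^{-3\pi^2k/2n^2})$. Summing the $4^{-k}$ coefficients, $\tfrac12+\tfrac{n-1}{2}+\tfrac{n-1}{4}=\tfrac{3n-1}{4}$, gives the first inequality.

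For the \emph{lower bound} I would apply Inequality~\ref{lowerbound} with $f=\tfrac12\Tr\rho_r$ and $r=n-1$ (even, since $n$ is odd), obtaining $\|Q^{*k}-U\|_\tv\ge\tfrac14\big|\Tr(\hat{Q}(\rho_{n-1})^k)\big|$. As $\cos\tfrac{(n-1)\pi}{n}=-\cos\tfrac{\pi}{n}$, diagonalization~\ref{diag2} gives $\hat{Q}(\rho_{n-1})$ the eigenvalues $\tfrac12(1-\cos\tfrac{\pi}{n})$ and $-\tfrac12(1+\cos\tfrac{\pi}{n})$, so $\Tr(\hat{Q}(\rho_{n-1})^k)=2^{-k}\big((1-\cos\tfrac{\pi}{n})^k+(-1)^k(1+\cos\tfrac{\pi}{n})^k\big)$. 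The reverse triangle inequality together with $(1-\cos\tfrac{\pi}{n})^k\le1$ gives $\big|\Tr(\hat{Q}(\rho_{n-1})^k)\big|\ge\big(\tfrac12(1+\cos\tfrac{\pi}{n})\big)^k-2^{-k}$, and the Appendix inequality $e^{-x^2/4-x^4/96-x^5/400}\le(1+\cos x)/2$ with $x=\pi/n$, valid once $\pi/n\le\tfrac12$ (i.e.\ $n\ge7$), yields the second inequality.

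The main obstacle is precisely the even-$r$ sum above: one of the two eigenvalue powers, $(\cos\tfrac{r\pi}{n}-1)^{2k}$, fails to be small when $r$ is close to $n$ (its base tends to $2$), so the two halves cannot be bounded in the same manner. The resolution is the observation that $r\mapsto n-r$ interchanges the two halves and, because $n$ is odd, never fixes an even index --- this is what legitimizes the reindexing. Everything else is the same constant-chasing as in the asymmetric case of Theorem~\ref{thm:nonsym}.
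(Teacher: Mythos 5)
Your argument is correct and follows the paper's proof essentially step for step: the same Fourier data ($\hat{Q}(\psi_1)=0$, $\hat{Q}(\psi_2)=\hat{Q}(\psi_3)=-1/2$, the scalar matrices for odd $r$, the diagonalization~\ref{diag2} for even $r$), the same split of the even-$r$ sum into a dominant $(1+|\cos|)^{2k}$ half reindexed over $j=1,\dots,(n-1)/2$ and bounded via $(1+\cos x)/2\le e^{-x^2/4}$ plus a half bounded by $1$, the same constant count $\tfrac12+\tfrac{n-1}{2}+\tfrac{n-1}{4}=\tfrac{3n-1}{4}$, and the same $\rho_{n-1}$ lower bound via the reverse triangle inequality and the Appendix inequality at $x=\pi/n$ for $n\ge 7$ --- the paper simply leaves the reindexing implicit. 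One cosmetic slip only: $r\mapsto n-r$ sends the even indices of $[2,n-1]$ to the odd indices of $[1,n-2]$, not onto $\{1,\dots,(n-1)/2\}$; the reindexing you want is $r\mapsto\min(r,n-r)$, but the multiset identity you actually use, namely that $\bigl\{\,|\cos(r\pi/n)| : r\ \text{even},\ 2\le r\le n-1\,\bigr\}$ coincides with $\bigl\{\cos(j\pi/n) : 1\le j\le (n-1)/2\bigr\}$, is correct, so nothing breaks.
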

Comparing the bounds in Theorem~\ref{thm:nonsym} to that in Theorem~\ref{thm:sym}, we conclude that the mixing time
for the non-symmetric random walk is approximately half of the mixing time of the symmetric random walk.  It would be
interesting to give a ``purely'' probabilistic proof of this phenomenon.

\appendix

\section{Some Inqualities on Cosine}

\begin{proposition}
For $x \in [0, \pi/2]$, $\cos x \leq e^{-x^2/2}$.
\end{proposition}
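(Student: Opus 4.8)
The plan is to reduce the inequality to a monotonicity statement and then settle that by differentiating once. Since $\cos(\pi/2)=0$, the case $x=\pi/2$ is immediate (the left-hand side is $0$, the right-hand side positive), so I would restrict attention to $x\in[0,\pi/2)$, where $\cos x>0$ and the logarithm is available. Taking logs, the claim becomes $\log\cos x\le -x^2/2$, i.e.\ that $g(x):=-x^2/2-\log\cos x$ is nonnegative on $[0,\pi/2)$.

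Next I would note $g(0)=0$ and compute $g'(x)=-x+\tan x$. Thus it suffices to know $\tan x\ge x$ for $x\in[0,\pi/2)$, which is the only external fact used and which can itself be obtained by the same one-derivative trick: setting $\varphi(x):=\tan x-x$ one has $\varphi(0)=0$ and $\varphi'(x)=\sec^2 x-1=\tan^2 x\ge 0$, so $\varphi\ge 0$. Hence $g'\ge 0$ on $[0,\pi/2)$, so $g$ is nondecreasing there, and therefore $g(x)\ge g(0)=0$, which is exactly $\cos x\le e^{-x^2/2}$.

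A variant that avoids logarithms entirely, and disposes of the closed interval uniformly, is to set $h(x):=e^{x^2/2}\cos x$, note $h(0)=1$, and compute $h'(x)=e^{x^2/2}(x\cos x-\sin x)$; since $x\cos x-\sin x$ vanishes at $0$ and has derivative $-x\sin x\le 0$ on $[0,\pi/2]$, it is $\le 0$ there, so $h'\le 0$, $h$ is nonincreasing, and $h(x)\le h(0)=1$ for all $x\in[0,\pi/2]$. Either route works; I would probably present the $g$ version, since it isolates the familiar inequality $\tan x\ge x$.

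There is essentially no hard step here. The only points to be careful about are that $\cos x$ is strictly positive on $[0,\pi/2)$ so that $\log\cos x$ is well defined and differentiable (the degeneracy at the endpoint being handled separately), and that the auxiliary inequality $\tan x\ge x$ is justified rather than merely quoted. The same template, namely check the value at $0$ and then show the derivative has a fixed sign, is what underlies the other cosine estimates invoked elsewhere in the paper.
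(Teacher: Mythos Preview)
Your argument is correct and is essentially identical to the paper's: take logarithms, reduce to $\log\cos x\le -x^2/2$, check equality at $0$, differentiate to get $\tan x\ge x$, and verify the latter via $\frac{d}{dx}\tan x=\sec^2 x\ge 1$ (equivalently $\sec^2 x-1=\tan^2 x\ge 0$). Your explicit treatment of the endpoint $x=\pi/2$ and the alternative via $h(x)=e^{x^2/2}\cos x$ are nice touches the paper omits, but the core route is the same.
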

\begin{proof}
We will show that for $x \in [0, \pi/2]$, $\log(\cos x ) \leq -x^2/2$.  Clearly this is true when $x=0$.
And we have
\begin{equation*}
\frac{d \log(\cos x)}{dx} = - \tan(x) \leq -x
\end{equation*}
 because $\frac{d \tan x}{dx} = 1/\cos^2(x) \geq 1$ for $x \in [0, \pi/2]$.
\end{proof}

\begin{proposition}
For $x \in [0, \pi]$, $(1+\cos x)/2 \leq e^{-x^2/4}$.
\end{proposition}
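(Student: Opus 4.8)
The plan is to mimic the proof of the previous proposition: pass to logarithms and compare derivatives. Set $g(x) = \log\bigl((1+\cos x)/2\bigr)$; the goal is $g(x) \leq -x^2/4$ on $[0,\pi]$. Since both sides vanish at $x = 0$, it suffices to show the derivative inequality $g'(x) \leq -x/2$ on the open interval $(0,\pi)$ and then integrate, dealing with the endpoint $x = \pi$ by hand.

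First I would compute $g'(x) = -\sin x/(1+\cos x)$. Applying the half-angle identities $\sin x = 2\sin(x/2)\cos(x/2)$ and $1+\cos x = 2\cos^2(x/2)$, this simplifies to $g'(x) = -\tan(x/2)$, valid for $x \in [0,\pi)$ where $\cos(x/2) \neq 0$. So the claim reduces to the pointwise inequality $\tan(x/2) \geq x/2$ for $x/2 \in [0,\pi/2)$.

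That inequality is the standard one, proved exactly as in the preceding proposition: writing $u = x/2$, the function $\tan u - u$ vanishes at $u = 0$ and has derivative $\sec^2 u - 1 \geq 0$ on $[0,\pi/2)$, hence $\tan u \geq u$ there. Integrating $g'(x) \leq -x/2$ from $0$ to $x$ then gives $g(x) \leq -x^2/4$ for every $x \in [0,\pi)$. At the endpoint $x = \pi$ the quantity $(1+\cos x)/2$ equals $0$, so $g(\pi) = -\infty \leq -\pi^2/4$ and the inequality holds trivially there too; taking exponentials recovers the stated form $(1+\cos x)/2 \leq e^{-x^2/4}$.

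The only mild subtlety — the closest thing to an obstacle — is the behavior at $x = \pi$: both $\tan(x/2)$ and $g$ blow up, so the derivative comparison only runs on the open interval and the endpoint must be checked separately, which is immediate. Everything else is a routine single-variable calculus argument directly parallel to the previous proposition.
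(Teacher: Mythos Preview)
Your proof is correct and follows essentially the same approach as the paper's: pass to logarithms, compare derivatives, and reduce to the inequality $\sin x/(1+\cos x) \geq x/2$, which the paper verifies by differentiating directly (obtaining $1/(1+\cos x) \geq 1/2$) while you first rewrite it as $\tan(x/2) \geq x/2$ via half-angle identities---the two computations are equivalent. Your explicit treatment of the endpoint $x=\pi$ is a small improvement over the paper, which leaves that case implicit.
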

\begin{proof}
We will show that for $x \in [0, \pi]$, $\log((1+\cos x)/2) \leq -x^2/4$.  Clearly it holds for $x = 0$.
And we have
\begin{equation*}
\frac{d \log \frac{1+\cos x}{2} }{dx} = -\frac{\sin(x)}{(1+\cos(x)} \leq -\frac{x}{2}
\end{equation*}
because 
\begin{equation*}
\frac{d \frac{\sin(x)}{1+\cos(x)} }{dx} = \frac{1}{1+\cos(x)} \geq \frac{1}{2}
\end{equation*}
 for $x \in [0, \pi]$.
\end{proof}

\begin{proposition}
For $x \in [0, 1/2]$, $e^{-x^2/2-x^4/12-17x^5/120} \leq \cos(x)$.
\end{proposition}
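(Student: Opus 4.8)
Taking logarithms (legitimate since $\cos x>0$ on $[0,\tfrac12]$), the claim is equivalent to $F(x):=\log(\cos x)+\tfrac{x^2}{2}+\tfrac{x^4}{12}+\tfrac{17x^5}{120}\ge 0$ on $[0,\tfrac12]$, and $F(0)=0$, so we are in exactly the situation of Propositions A.1 and A.2 and can try to win by differentiating. A short computation gives $F'(x)=x+\tfrac{x^3}{3}+\tfrac{17x^4}{24}-\tan x$, then $F''(x)=x^2+\tfrac{17x^3}{6}-\tan^2 x$ (after using $\sec^2=1+\tan^2$), then $F'''(x)=2x+\tfrac{17x^2}{2}-2\tan x\sec^2 x$, and, expanding $\sec^4$ and $\sec^2\tan^2$ via $\sec^2=1+\tan^2$ once more,
\[ F''''(x)=17x-8\tan^2 x-6\tan^4 x. \]
Each of $F,F',F'',F'''$ vanishes at $x=0$, so the whole problem collapses to showing $F''''(x)\ge 0$ on $[0,\tfrac12]$ and then integrating up four times.

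To bound $F''''$ I would first record the crude estimate $\tan x\le \tfrac{3x}{2}$ on $[0,\tfrac12]$. Indeed $\tfrac{3x}{2}-\tan x$ vanishes at $0$ and has derivative $\tfrac32-\sec^2 x$; since $\sec^2$ is increasing on $[0,\tfrac\pi2)$ and $\cos x\ge 1-\tfrac{x^2}{2}$ gives $\cos(\tfrac12)\ge\tfrac78$, we get $\sec^2 x\le\sec^2(\tfrac12)\le\left(\tfrac87\right)^2=\tfrac{64}{49}<\tfrac32$ throughout $[0,\tfrac12]$, so $\tfrac{3x}{2}-\tan x$ is nondecreasing and hence nonnegative. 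Plugging this in, and using $x^4\le\tfrac{x^2}{4}$ and $x^2\le\tfrac{x}{2}$ on $[0,\tfrac12]$,
\[ 8\tan^2 x+6\tan^4 x\le 18x^2+\tfrac{243}{8}x^4\le\tfrac{819}{32}x^2\le\tfrac{819}{64}x<17x, \]
so $F''''(x)\ge\left(17-\tfrac{819}{64}\right)x\ge 0$ on $[0,\tfrac12]$.

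With $F''''\ge 0$ on $[0,\tfrac12]$ in hand, the rest is a fourfold repetition of the monotonicity argument from Propositions A.1 and A.2: $F''''\ge 0$ together with $F'''(0)=0$ forces $F'''\ge 0$ on $[0,\tfrac12]$; then $F''(0)=0$ forces $F''\ge 0$; then $F'(0)=0$ forces $F'\ge 0$; then $F(0)=0$ forces $F\ge 0$, which is the assertion. I expect the only real friction to be the bookkeeping in differentiating $\log\cos x$ four times and collapsing the answer to the clean form $17x-8\tan^2 x-6\tan^4 x$; once that identity is in place, the generous slack in the final estimate ($\tfrac{819}{64}\approx 12.8$ against $17$) shows that even the blunt bound $\tan x\le\tfrac{3x}{2}$ is comfortably enough, and nothing subtle remains.
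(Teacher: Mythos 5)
Your argument is correct, and it takes a genuinely different route from the paper. The paper proves this proposition by Taylor's theorem with Lagrange remainder: it expands $\log(\cos x)$ to fourth order, identifies the fifth derivative $f(x')=-16\tan(x')\sec(x')^4-8\tan(x')^3\sec(x')^2$, and bounds $|f(x')|\leq |f(1/2)|\leq 17$ on $[0,1/2]$, which is exactly where the constant $17$ comes from. You instead extend the iterated-monotonicity style of Propositions A.1 and A.2: setting $F(x)=\log(\cos x)+x^2/2+x^4/12+17x^5/120$, you correctly compute $F''''(x)=17x-8\tan^2 x-6\tan^4 x$ (I checked the differentiation; it is consistent with the paper's fifth derivative of $\log\cos$), note that $F,F',F'',F'''$ all vanish at $0$, and reduce everything to the elementary bound $\tan x\leq \tfrac{3x}{2}$ on $[0,1/2]$, which you justify via $\cos(1/2)\geq 7/8$. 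The slack $\tfrac{819}{64}<17$ is ample, so the estimate goes through. What the paper's route buys is brevity and a transparent explanation of the coefficients $-x^2/2-x^4/12$ and of where $17$ originates; what your route buys is uniformity with A.1 and A.2 and freedom from Taylor's remainder theorem and from numerically evaluating the fifth derivative at $1/2$, at the cost of one extra layer of differentiation and bookkeeping. Two trivial points to tidy: the strict inequality $\tfrac{819}{64}x<17x$ fails at $x=0$ (write $\leq$; your final conclusion $F''''\geq 0$ is unaffected), and $\cos x\geq 1-x^2/2$ is invoked without proof --- it is standard, but for a fully self-contained appendix you could derive it by the same monotonicity trick used for $\tan x \leq \tfrac{3x}{2}$.
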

\begin{proof}
Taylor expansion of $\log(\cos x)$ around 0 gives for any $0 < x \leq 1/2$:
\begin{equation*}
\log (\cos x) = - \frac{x^2}{2} - \frac{x^4}{12} + f(x') \frac{x^5}{120},
\end{equation*}
where $0 < x' < x$ and 
$$f(x') = -16 \tan(x')\sec(x')^4-8\tan(x')^3 \sec(x')^2$$ is the
fifth derivative of $\log(\cos x)$.  Clearly, $|f(x')| \leq |f(1/2)| \leq 17$; thus
\begin{equation*}
\log (\cos x) \geq - \frac{x^2}{2} - \frac{x^4}{12} - \frac{17}{120} x^5
\end{equation*}
\end{proof}

\begin{proposition}
For $x \in [0, 1/2]$, $e^{-x^2/4-x^4/96-x^5/400} \leq (\cos x+1)/2$.
\end{proposition}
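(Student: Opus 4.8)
The plan is to follow the template of the previous proposition: take logarithms and reduce the claim to showing
\[
g(x) := \log\frac{1+\cos x}{2} \;\geq\; -\frac{x^2}{4} - \frac{x^4}{96} - \frac{x^5}{400}
\qquad\text{for } x\in[0,1/2].
\]
The key simplification is the half-angle identity $\tfrac{1+\cos x}{2} = \cos^2(x/2)$, which gives $g(x) = 2\log\cos(x/2)$; thus every statement about $g$ can be pulled back from a statement about $\log\cos$ on the halved interval $[0,1/4]$, where the previous proposition's analysis applies almost verbatim.

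First I would record the low-order Taylor data at $0$. Since $g$ is an even function, its constant, linear, cubic, and quintic Taylor coefficients all vanish, and its fourth-degree Taylor polynomial is $-x^2/4 - x^4/96$ — this is just twice the expansion $\log\cos u = -u^2/2 - u^4/12 + \cdots$ from the proof of the previous proposition, evaluated at $u = x/2$. Then I would write Taylor's theorem with the Lagrange remainder after the fourth-degree term,
\[
g(x) = -\frac{x^2}{4} - \frac{x^4}{96} + \frac{g^{(5)}(x')}{120}\,x^5, \qquad 0 < x' < x,
\]
so that, because $x^5 \geq 0$ on the interval, it suffices to show $g^{(5)}(x') \geq -\tfrac{3}{10}$ for all $x' \in [0,1/2]$.

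Next I would compute $g^{(5)}$ from $g(x) = 2\log\cos(x/2)$: by the chain rule $g^{(5)}(x) = \tfrac{1}{16}\,h^{(5)}(x/2)$, where $h = \log\cos$ and $h^{(5)}(y) = f(y) = -16\tan(y)\sec(y)^4 - 8\tan(y)^3\sec(y)^2$ is exactly the function already analysed in the previous proposition. On $[0,1/4]$ every term of $f$ is nonpositive and increasing in absolute value, so $f$ attains its minimum at $y = 1/4$, and a direct evaluation gives $|f(1/4)| < 4.8$, hence $f(y) \geq -4.8$ throughout $[0,1/4]$. For $x' \in [0,1/2]$ we have $x'/2 \in [0,1/4]$, so $g^{(5)}(x') = \tfrac{1}{16} f(x'/2) \geq -\tfrac{4.8}{16} = -\tfrac{3}{10}$, and plugging this into the remainder formula yields the desired inequality.

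The one genuinely delicate point is the numerical estimate $|f(1/4)| \leq 4.8$: it is tight (the true value is about $4.78$), so it must be verified with a little care rather than invoked through the cruder bound $|f| \leq 17$ used in the previous proof. Indeed, using that cruder bound on the full interval $[0,1/2]$ would only produce the coefficient $17/1920 \approx 0.0089$ in place of $1/400 = 0.0025$, which is too weak; it is precisely the restriction to the halved interval $[0,1/4]$, made possible by the identity $\tfrac{1+\cos x}{2} = \cos^2(x/2)$, that recovers enough room. Everything else — the differentiation, the sign and monotonicity of $f$, and the assembly of the remainder estimate — is routine and mirrors the earlier propositions.
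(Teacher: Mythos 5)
Your proof is correct and is essentially the paper's argument: Taylor's theorem with the Lagrange form of the remainder after the degree-four term, combined with the bound that the fifth derivative of $\log\frac{1+\cos x}{2}$ is at least $-0.3$ on $[0,1/2]$ (your $4.8/16$ is exactly the paper's $0.3$, and the paper's remainder function $\frac{1}{8}\sec(x'/2)^5\bigl(-11\sin(x'/2)+\sin(3x'/2)\bigr)$ coincides with your $\frac{1}{16}f(x'/2)$). The only difference is bookkeeping: via $\frac{1+\cos x}{2}=\cos^{2}(x/2)$ you recycle the fifth derivative of $\log\cos$ from the previous proposition and obtain its monotonicity termwise, whereas the paper computes the fifth derivative of $\log\frac{1+\cos x}{2}$ directly and checks the sign of one further derivative; both hinge on the same, equally tight numerical evaluation ($\approx 4.78<4.8$, i.e.\ $\approx 0.299<0.3$).
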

\begin{proof}
Taylor expansion of $\log ( (\cos x+1)/2)$ around 0 gives for any $0 < x \leq 1/2$:
\begin{equation*}
\log ( (\cos x+1)/2) = - \frac{x^2}{4} - \frac{x^4}{96} + f(x') \frac{x^5}{120},
\end{equation*}
where $0 < x' < x$ and 
$$f(x') = \frac{1}{8} \sec(x'/2)^5 (-11 \sin(x'/2) + \sin(3x'/2))$$ is the
fifth derivative of $\log ( (\cos x+1)/2)$.  

Since the derivative of $f$,
$$f'(z) = -\frac{1}{16}(33 - 26 \cos(z) + \cos(2z)) \sec(z/2)^6 < 0$$
for all $0 \leq z \leq 1/2$, we conclude that $f(z)<0$ and 
$|f(x')| \leq |f(1/2)| \leq 0.3$.

Therefore,
\begin{equation*}
\log ( (\cos x+1)/2) \geq - \frac{x^2}{2} - \frac{x^4}{96} - \frac{0.3}{120} x^5.
\end{equation*}
\end{proof}


\begin{thebibliography}{10}

\bibitem{pd} Diaconis, Persi. Group Representations in Probability and Statistics, 
Institute of Mathematical Statistics Lecture Notes Vol. 11, 1988.

\bibitem{lang} Serre, Jean-Pierre.  Linear Representations of Finite Groups.  Springer-Verlag, 1977.

\end{thebibliography}
\end{document}